\documentclass{amsproc}%
\usepackage{amsfonts}
\usepackage{amsmath}
\usepackage{amsthm}
\usepackage[v2]{xy}
\usepackage{amssymb}
\usepackage{graphicx}
\usepackage{enumerate}
\usepackage{color}
\usepackage{setspace}

\setcounter{MaxMatrixCols}{30}
\newtheorem{theorem}{Theorem}[section]
\newtheorem{corollary}[theorem]{Corollary}
\newtheorem{lemma}[theorem]{Lemma}
\newtheorem{proposition}[theorem]{Proposition} 
\theoremstyle{definition}
\newtheorem{definition}[theorem]{Definition}
\theoremstyle{example}

\numberwithin{equation}{section}
\theoremstyle{remark}

\numberwithin{equation}{section}

\newcommand{\dd}{\,\textrm{d}}

\begin{document}
\allowdisplaybreaks[4]

\renewcommand{\baselinestretch}{1.2}  \large \normalsize

\title{Additive spectra of the $\frac14$ Cantor measure}

\author[P.E.T. Jorgensen]{Palle E. T. Jorgensen}
\address[Palle E.T. Jorgensen]{Department of Mathematics, The University of Iowa, Iowa
City, IA 52242-1419, U.S.A.}
\email{palle-jorgensen@uiowa.edu}
\urladdr{http://www.math.uiowa.edu/\symbol{126}jorgen/}
\author[K.A. Kornelson]{Keri A. Kornelson}
\address[Keri Kornelson]{Department of Mathematics, The University of Oklahoma, Norman, OK, 73019, U.S.A.}
\email{kkornelson@ou.edu}
\urladdr{http://www.math.ou.edu/\symbol{126}kkornelson/}
\author[K.L. Shuman]{Karen L. Shuman}
\address[Karen Shuman]{Department of Mathematics and Statistics,
Grinnell College, Grinnell, IA 50112-1690, U.S.A.}
\email{shumank@math.grinnell.edu}
\urladdr{http://www.math.grinnell.edu/\symbol{126}shumank/}

\thanks{The second author was supported in part by grant \#244718 from The Simons Foundation.}

\begin{abstract}
In this paper, we add to the characterization of the Fourier spectra for Bernoulli convolution measures.  These measures are supported on Cantor subsets of the line.  We prove that performing an odd additive translation to half the canonical spectrum for the $\frac14$ Cantor measure always yields an alternate spectrum.   We call this set an additive spectrum.  The proof works by connecting the additive set to a spectrum formed by odd multiplicative scaling. 
\end{abstract}
\date{\today}
\keywords{Cantor set, fractal, measure, Bernoulli convolution, spectrum, operator, isometry, unitary}
\subjclass[2010]{42B05, 28A80, 28C10, 47A10}
\maketitle


\thispagestyle{empty}

 \section{Introduction}
 
  Traditional Cantor sets are generated by iterations of an operation of down-scaling by fractions which are powers of a fixed positive integer. For each iteration in the process, we leave gaps.  For example, the best-known ternary Cantor set is formed by scaling down by $\frac13$ and leaving a single gap in each step.  An associated Cantor measure $\mu$ is then obtained by the same sort of iteration of scales, and, at each step, a renormalization. In accordance with classical harmonic analysis, these measures may be seen to be infinite Bernoulli convolutions.

   Our present analysis is motivated by earlier work, beginning with \cite{JoPe98}.   We consider recursive down-scaling by  $\frac{1}{2n}$ for $n \in \mathbb{N}$ and leave a single gap at each iteration-step. It was shown in \cite{JoPe98}  that the associated Cantor measures $\mu_{\frac{1}{2n}}$  have the property that $L^2(\mu_{\frac{1}{2n}})$ possesses orthogonal Fourier bases of complex exponentials (i.e. Fourier ONBs).  More recently, it was shown in \cite{Dai12} that the scales $\frac{1}{2n}$ are the \textit{only} values that generate measures with Fourier bases.  
   
     Given a fixed Cantor measure $\mu$,  a corresponding set of frequencies  $\Gamma$ of exponents in an ONB is said to be a \textit{spectrum} for $\mu$.  For example, in the case of recursive scaling by powers of $\frac14$ , i.e. $n= 2$, a possible spectrum $\Gamma$ for $L^2(\mu)$  has the form  $\Gamma$ as shown below in Equation \eqref{eqn:Gamma}.   A spectrum for a Cantor measure turns out to be a \textit{lacunary} (in the sense of Szolem Mandelbrojt) set of integers or half integers. We direct the interested reader to \cite{Kah85} regarding lacunary series and their Riesz products.

   When $n$ and $\mu$  are fixed, we now become concerned with the possible variety of spectra.  Given $\Gamma$ some canonical choice of spectrum for $\mu$, then one possible way to construct a new Fourier spectrum for $L^2(\mu)$ is to scale by an odd positive integer $p$ to form a set $p\Gamma$.  While for some values of $p$ this scaling produces a spectrum, it is known that other values of $p$ do not yield spectra.  
     
 This particular question is intrinsically multiplicative: Since $\mu$ is an infinite Bernoulli convolution,  the ONB questions  involve consideration of infinite products of the Riesz type.  Despite this intuition, we show here (Theorem \ref{thm:main}) that there is a connection between this multiplicative construction and a construction of new ONBs with an additive operation.  We are then able to produce even more examples of these additive-construction spectra. 
 
\section{Background}
Throughout this paper, we consider the Hilbert space $L^2(\mu_{\frac14})$ where $\mu_{\frac14}$ is the $\frac14$-Bernoulli convolution measure.  This measure has a rich history, dating back to work of Wintner and Erd\H{o}s \cite{W35, E39, E40}.  More recently, Hutchinson \cite{Hut81} developed a construction of Bernoulli measures via iterated function systems (IFSs).  The measure $\mu_{\frac14}$ is supported on a Cantor subset $X_{\frac14}$ of $\mathbb{R}$ which entails scaling by $\frac14$.  In 1998, Jorgensen and Pedersen \cite{JoPe98}  discovered that the Hilbert space $L^2(\mu_{\frac14})$ contains a Fourier basis --- an orthonormal basis of exponential functions --- and hence allows for a Fourier analysis. 

For ease of notation, throughout this paper we will write $e_t$ for the function $e^{2\pi i t \cdot}$ and for a discrete set $\Gamma$ we will write $E(\Gamma)$ for the collection of exponentials $\{e_{\gamma}\,:\, \gamma \in \Gamma\}$. 

 There is a self-similarity inherent in the $\frac14$-Bernoulli convolution \begin{equation}\label{eqn:ss-measure} 
\int_{X_{\frac14}} f \dd \mu_{\frac14} = \frac12 \int_{X_{\frac14}} f\Bigr(\frac14(x+1)\Bigr) \dd \mu_{\frac14}(x) + \frac12 \int_{X_{\frac14}} f\Bigr(\frac14(x-1)\Bigr) \dd \mu_{\frac14}(x)
\end{equation}
which yields an infinite product formulation for $\widehat{\mu}_{\frac14}$:
\begin{equation}\label{eqn:infproduct}
\widehat{\mu}_{\frac14}(t) = \int_{X_{\frac14}}e^{2\pi i t x} \dd \mu_{\frac14}(x) = \prod_{k=1}^{\infty} \cos\Bigr(\frac{2\pi t}{4^k}\Bigr).
\end{equation}

Exponential functions $e_{\gamma}$ and $e_{\xi}$ are orthogonal when \[ \langle e_{\gamma}, e_{\xi} \rangle = \widehat{\mu}_{\frac14}(\gamma - \xi) = 0.\]  A collection of exponential functions $E(\Gamma)$ indexed by the discrete set $\Gamma$ is an orthonormal basis for $L^2(\mu_{\frac14})$ exactly when the function \begin{equation}\label{eqn:spectral} c_{\Gamma}(t) := \sum_{\gamma \in \Gamma} |\langle e_t, e_{\gamma} \rangle |^2 = \sum_{\gamma \in \Gamma} \prod_{k=1}^{\infty} \cos^2\Bigr(\frac{2 \pi (t-\gamma)}{4^k} \Bigr) \end{equation} is the constant function $1$.  We call the function $c_{\Gamma}$ the \textit{spectral function} for the set $\Gamma$.

The Fourier basis for $\mu_{\frac14}$ constructed in \cite{JoPe98} is the set $\{e^{2\pi i \gamma \cdot}\,:\, \gamma \in \Gamma\}$, where 
\begin{equation}\label{eqn:Gamma} \Gamma = \left\{ \sum_{i=0}^m a_i4^i \,:\, m \textrm{ finite}, a_i \in \{0,1\} \right\} = \{0, 1, 4, 5, 16, 17, 20 \ldots\}. 
\end{equation}

If $E(\Gamma)$ is an orthonormal basis (ONB) for $L^2(\mu_{\frac14})$, we say that $\Gamma$ is a \textit{spectrum} for $\mu_{\frac14}$.  It is straightforward to show that if $\Gamma$ is a spectrum for $\mu_{\frac14}$ and $p$ is an odd integer, then $E(p\Gamma)$ is an orthogonal collection of exponential functions.  In many cases, we find that $E(p\Gamma)$ is actually another ONB \cite{LaWa02,DuJo09,JKS11}.  This is rather surprising, or at least very different behavior from the usual Fourier analysis on an interval with respect to Lebesgue measure.   

We often refer to the spectrum in Equation \eqref{eqn:Gamma} as the \textit{canonical spectrum} for $L^2(\mu_{\frac14})$, while other spectra for the same measure space can be called \textit{alternate spectra}. 
\section{Isometries} 

In this section, we describe two naturally occurring isometries on $L^2(\mu_{\frac14})$ which are defined via their action on the canonical Fourier basis $E(\Gamma)$.  Observe from Equation \eqref{eqn:Gamma} that $\Gamma$ satisfies the invariance equation \[ \Gamma = 4\Gamma \sqcup (4\Gamma +1),\] where $\sqcup$ denotes the disjoint union.  We then define 

\begin{eqnarray}\label{eqn:s0s1} S_0 &:& e_{\gamma} \mapsto e_{4\gamma} \\ \nonumber S_1&:& e_{\gamma} \mapsto e_{4\gamma +1} \qquad \textrm{for all } \gamma \in \Gamma. \end{eqnarray} 
Since $S_0$ and $S_1$ map the ONB elements into a proper subset of the ONB, they are proper isometries.  Therefore, for $i=0,1$ we have $S_i^*S_i = I$ and $S_iS_i^*$ is a projection onto the range of the respective operator. The adjoints of $S_0, S_1$ are readily computed (see \cite{JKS12a} for details):   \begin{equation}\label{Eqn:S_0^*}
S_0^*e_{\gamma} = \left\{ \begin{matrix} e_{\frac{\gamma}{4}} & \textrm{when } \gamma \in 4\Gamma \\
0 & \textrm{otherwise} \end{matrix} \right.
\end{equation}
and
\begin{equation}\label{Eqn:S_1^*} 
S_1^*e_{\gamma} =  \left\{ \begin{matrix} e_{\frac{\gamma- 1}{4}} & \textrm{when } \gamma \in 1 + 4\Gamma \\
0 & \textrm{otherwise.} \end{matrix} \right.
\end{equation}

It is shown in \cite[Section 2]{JKS12c} that the definitions of $S_0$ and $S_1$ extend to all $e_n$ for $n \in \mathbb{Z}$, i.e. 
\begin{equation}\label{eqn:s0s1Z} S_0: e_n \mapsto e_{4n} \textrm{\; and \;} S_1: e_n \mapsto e_{4n+1} \quad \forall n \in \mathbb{Z}.\end{equation}

For every integer $N > 1$, there is a $C^*$-algebra with $N$ generators called the Cuntz algebra, which we denote by $\mathcal{O}_N$ \cite{Cun77}.   We will describe representations of $\mathcal{O}_2$ which are generated by two isometries on $L^2(\mu_{\frac14})$ satisfying the conditions below.
 
 \begin{definition}\label{Defn:CuntzRel}  We say that isometry operators $T_0, T_1$ on $L^2(\mu_{\frac14})$ satisfy \textit{Cuntz relations}
if 
\begin{enumerate}  
\item $T_0T_0^* + T_1T_1^* = I$,  
\item $T_i^*T_j = \delta_{i,j}I$ \;\; for $i,j = 0, 1$.
\end{enumerate}
When these relations hold, $\{T_0,T_1\}$ generate a representation of the Cuntz algebra $O_2$.
\end{definition}

From \cite{BrJo99,JKS12a}, we know that $S_0$ and $S_1$ defined in Equation \eqref{eqn:s0s1} satisfy the Cuntz relations for $k=2$, hence yield a representation of the Cuntz algebra $\mathcal{O}_2$ (in fact, an irreducible representation) within the algebra of bounded operators $\mathcal{B}(L^2(\mu_{\frac14}))$. 


 \section{Spectral function decompositions}
As we mentioned above, given a spectrum $\Gamma$, the frequencies $p\Gamma$, for $p$ an odd integer, generate an orthonormal collection of exponential functions in $L^2(\mu)$.  Given $\Gamma$ from Equation \eqref{eqn:Gamma}, one question of interest is the characterization of the odd integers $p$ for which the scaled spectrum $p\Gamma$ generates an ONB.  As a means of exploring this question, we let $U_p$ be the operator \begin{equation}\label{eqn:Up} U_p: e_{\gamma} \mapsto e_{p\gamma} \qquad \forall \gamma \in \Gamma.\end{equation}  Since $U_p$ maps an ONB to an orthonormal collection, $U_p$ is an isometry and is unitary if and only if $E(p\Gamma)$ is an ONB.

The following lemmas provide useful relationships between the isometries $S_0$, $S_1$, and $U_p$. 
\begin{lemma}\label{lem:vp} Let $S_0$ and $S_1$ be the isometry operators from Equation  \eqref{eqn:s0s1}.  If $\rho$ is a $*$-automorphism on $\mathcal{B}(L^2(\mu_{\frac14}))$, then the operator \[ W = \rho(S_0)S_0^* + \rho(S_1)S_1^*\] is unitary.

\end{lemma}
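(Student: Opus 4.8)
The plan is to show directly that $W$ has an inverse, and the natural candidate is $W^* = S_0\rho(S_0)^* + S_1\rho(S_1)^*$, since $\rho$ being a $*$-automorphism gives $\rho(S_i)^* = \rho(S_i^*)$. First I would record the two algebraic facts I need: that $S_0, S_1$ satisfy the Cuntz relations (stated in the excerpt, from \cite{BrJo99,JKS12a}), namely $S_0S_0^* + S_1S_1^* = I$ and $S_i^*S_j = \delta_{i,j}I$; and that a $*$-automorphism $\rho$ preserves all of these relations, so $\rho(S_0),\rho(S_1)$ also satisfy $\rho(S_0)\rho(S_0)^* + \rho(S_1)\rho(S_1)^* = I$ and $\rho(S_i)^*\rho(S_j) = \delta_{i,j}I$.

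Next I would compute $W^*W$ by expanding the product
\[
W^*W = \bigl(S_0\rho(S_0)^* + S_1\rho(S_1)^*\bigr)\bigl(\rho(S_0)S_0^* + \rho(S_1)S_1^*\bigr)
\]
into four terms. The cross terms vanish because $\rho(S_i)^*\rho(S_j) = 0$ for $i \neq j$, and the diagonal terms give $S_0\rho(S_0)^*\rho(S_0)S_0^* + S_1\rho(S_1)^*\rho(S_1)S_1^* = S_0S_0^* + S_1S_1^* = I$, using $\rho(S_i)^*\rho(S_i) = I$ and then the Cuntz partition-of-unity relation for $S_0,S_1$. Symmetrically, I would compute
\[
WW^* = \bigl(\rho(S_0)S_0^* + \rho(S_1)S_1^*\bigr)\bigl(S_0\rho(S_0)^* + S_1\rho(S_1)^*\bigr),
\]
where now the cross terms vanish by $S_i^*S_j = 0$ for $i \neq j$ and the diagonal terms collapse to $\rho(S_0)\rho(S_0)^* + \rho(S_1)\rho(S_1)^* = \rho(S_0S_0^* + S_1S_1^*) = \rho(I) = I$, using $S_i^*S_i = I$ and then the image under $\rho$ of the partition-of-unity relation. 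Hence $W^*W = WW^* = I$ and $W$ is unitary.

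There is no serious obstacle here; the only point requiring a little care is bookkeeping which Cuntz relation is used where — the computation of $W^*W$ uses the relation $S_0S_0^*+S_1S_1^*=I$ for the original isometries, while $WW^*$ uses its image $\rho(S_0)\rho(S_0)^*+\rho(S_1)\rho(S_1)^*=I$, and in each case the orthogonality relation $T_i^*T_j=\delta_{i,j}I$ is invoked for the factor sitting in the middle. One should also note explicitly that $\rho$, being a $*$-homomorphism, commutes with adjoints and sends $I$ to $I$, which is what makes $\rho(S_i)$ a genuine isometry and $\rho(S_0),\rho(S_1)$ a Cuntz pair; this is exactly the structure that forces both products to telescope to the identity.
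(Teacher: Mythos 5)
Your proposal is correct and follows essentially the same route as the paper: expand $WW^*$ and $W^*W$, kill the cross terms with the orthogonality relations $S_i^*S_j=\delta_{i,j}I$ (respectively their images under $\rho$), and collapse the diagonal terms via $\rho(S_0S_0^*+S_1S_1^*)=\rho(I)=I$ and $S_0S_0^*+S_1S_1^*=I$. The paper writes out only the $WW^*$ computation and declares $W^*W=I$ ``similar,'' so your explicit second computation is just the detail the paper leaves to the reader.
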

\begin{proof} 
Assume $\rho$ is a $*$-automorphism.  The Cuntz relations on $S_0$ and $S_1$ give 
\begin{eqnarray*} WW^* &=& \Bigr(\rho(S_0)S_0^* + \rho(S_1)S_1^*\Bigr) \Bigr(\rho(S_0)S_0^* + \rho(S_1)S_1^*\Bigr)^*\\ &=& 
 \Bigr(\rho(S_0)S_0^* + \rho(S_1)S_1^*\Bigr)\Bigr(S_0\rho(S_0^*) + S_1\rho(S_1^*) \Bigr)\\ &=& \rho(S_0)S_0^*S_0 \rho(S_0^*) + \rho(S_0)S_0^*S_1\rho(S_1^*) \\& & \quad + \rho(S_1)S_1^*S_0\rho(S_0^*) + \rho(S_1)S_1^*S_1\rho(S_1^*) \\ &=& 
 \rho(S_0)\rho(S_0^*) + \rho(S_1)\rho(S_1^*)\\ &=& \rho(S_0S_0^* + S_1S_1^*)\\ &=& \rho(I) = I
 \end{eqnarray*}

A similar computation proves that $W^*W = I$, hence $W$ is unitary.
\end{proof}

\begin{lemma}\label{lem:US1} Let $M_k$ be the multiplication operator $M_kf = e_kf$.  Given $p \in \mathbb{N}$ such that $U_p$ is unitary, we define the map $\widetilde{\alpha}(X) = U_pXU_p^*$ on $\mathcal{B}(L^2(\mu_{\frac14}))$.  Then $\widetilde{\alpha}(S_0) = S_0$ and  $\widetilde{\alpha}(S_1) = M_{p-1}S_1$.  
\end{lemma}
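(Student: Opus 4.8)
The plan is to verify both claimed identities by testing the operators on the canonical basis $E(\Gamma)$, using the explicit formulas for $S_0$, $S_1$, $S_0^*$, $S_1^*$ from Equations \eqref{eqn:s0s1}, \eqref{Eqn:S_0^*}, \eqref{Eqn:S_1^*}, together with the defining action of $U_p$ from \eqref{eqn:Up} and its inverse $U_p^*: e_{p\gamma}\mapsto e_\gamma$ (which exists and is unitary by hypothesis). Since these operators are bounded and $E(\Gamma)$ is an ONB for $L^2(\mu_{\frac14})$, checking agreement on each $e_\gamma$ suffices. The key number-theoretic fact making the computation go through is that multiplication by $4$ preserves $p\Gamma$ in the appropriate sense: because $\Gamma = 4\Gamma \sqcup (4\Gamma+1)$ and $p$ is odd, one can track when an element lies in the relevant coset.

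For $\widetilde{\alpha}(S_0) = S_0$: I would start from an arbitrary basis vector $e_{p\gamma}$ (these span the space since $U_p$ is unitary, so $E(p\Gamma)$ is an ONB), and compute $U_p S_0 U_p^* e_{p\gamma} = U_p S_0 e_\gamma = U_p e_{4\gamma} = e_{4p\gamma}$. On the other hand $S_0 e_{p\gamma} = e_{4p\gamma}$, using the extension \eqref{eqn:s0s1Z} of $S_0$ to all of $E(\Z)$. Since $4p\gamma$ is computed the same way in both cases, the two agree on every $e_{p\gamma}$, hence $\widetilde{\alpha}(S_0) = S_0$.

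For $\widetilde{\alpha}(S_1) = M_{p-1}S_1$: similarly compute $U_p S_1 U_p^* e_{p\gamma} = U_p S_1 e_\gamma = U_p e_{4\gamma+1}$. The subtlety here is that $4\gamma + 1$ need not lie in $\Gamma$ after we are forced to think of $U_p$ acting on it — but by \eqref{eqn:s0s1Z}, $S_1$ is defined on all of $E(\Z)$, and we need the analogous extension of $U_p$. The cleanest route is to observe that $4\gamma + 1 = p(4\gamma') + r$ has no clean form, so instead I would argue directly: $U_p e_{4\gamma+1}$ should be interpreted via the extension, and the point is that $M_{p-1}S_1 e_{p\gamma} = M_{p-1} e_{4p\gamma+1} = e_{p-1} e_{4p\gamma+1} = e_{4p\gamma + p}$. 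So the identity $\widetilde{\alpha}(S_1) = M_{p-1}S_1$ amounts to showing $U_p(e_{4\gamma+1}) = e_{4p\gamma+p} = e_{p(4\gamma+1)}$, i.e. that $U_p$, suitably extended, still acts by $e_n \mapsto e_{pn}$ on the relevant integers. This should follow from \cite{JKS12c} and the structure already invoked for \eqref{eqn:s0s1Z}; I would cite that extension and then the computation is immediate.

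The main obstacle is the bookkeeping around domains: $U_p$ is defined a priori only on $E(\Gamma)$, and we must make sense of $U_p^* e_{p\gamma}$, $S_i U_p^*$, and then $U_p$ applied to elements like $e_{4\gamma+1}$ which may or may not sit in $\Gamma$. I would resolve this first, at the top of the proof, by recording the facts that $U_p^* e_{p\gamma} = e_\gamma$ and that $U_p$ extends consistently to act as $e_n \mapsto e_{pn}$ on the integers arising here (paralleling \eqref{eqn:s0s1Z} for $S_0, S_1$), so that all four composite operators are unambiguously defined. Once the domains are pinned down, both identities reduce to the single arithmetic observation that $p \cdot (4\gamma) = 4(p\gamma)$ and $p \cdot (4\gamma+1) = 4(p\gamma) + p$, which is exactly what produces the extra multiplication factor $M_{p-1}$ in the second identity and none in the first.
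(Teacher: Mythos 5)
Your proposal is correct and follows essentially the same route as the paper: check the identities on basis vectors, use the extension \eqref{eqn:s0s1Z} of $S_1$ (and $S_0$) to handle $S_1e_{p\gamma}$ when $p\gamma\notin\Gamma$, and invoke unitarity of $U_p$ to pass between the intertwining relation and the conjugated form (the paper disposes of $\widetilde{\alpha}(S_0)=S_0$ by citing the commutation $U_pS_0=S_0U_p$ from \cite{JKS12a}, but your direct check is fine). The one ``subtlety'' you flag is actually vacuous: since $\Gamma=4\Gamma\sqcup(4\Gamma+1)$, the frequency $4\gamma+1$ lies in $\Gamma$ for every $\gamma\in\Gamma$, so $U_pe_{4\gamma+1}=e_{p(4\gamma+1)}=e_{4p\gamma+p}$ follows directly from the definition \eqref{eqn:Up} with no extension of $U_p$ and no appeal to \cite{JKS12c}; the only extension genuinely needed is that of $S_1$ in \eqref{eqn:s0s1Z}, exactly as in the paper's computation.
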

\begin{proof}  It was proved in \cite{JKS12a} that  $U_p$ commutes with $S_0$ for all odd $p$, so $\widetilde{\alpha}(S_0) = S_0$.  Since $U_p$ is unitary, we have $U_p^*U_p =U_pU_p^*= I$.  We prove that $M_{p-1}S_1U_p =U_pS_1$, which is thus equivalent to the statement of the lemma. 
\begin{eqnarray*}  U_pS_1e_{\gamma} &=& U_pe_{4\gamma +1}\\ &=& e_{4p\gamma + p} , \quad \textrm{and}\\ M_{p-1}S_1U_pe_{\gamma} &=& M_{p-1}S_1e_{p\gamma}\\ &=& M_{p-1}e_{4p\gamma +1} \quad \textrm{by extension of } S_1 \textrm{ to } \mathbb{N} \\ &=& e_{4p\gamma + p} \end{eqnarray*}
Therefore, \[\widetilde{\alpha}(S_1) = U_pS_1U_p^* = M_{p-1}S_1U_pU_p^* = M_{p-1}S_1.\]

\end{proof}

 We now discover a connection between the scaled spectrum $p\Gamma$ and what we call an \textit{additive spectrum} $E(4\Gamma) \cup E(4\Gamma + p)$.  It will turn out that this connection tells us more about the additive spectra than the scaled spectra.

\begin{theorem}\label{thm:main}
Given any odd natural number $p$, if $E(p\Gamma)$ is an ONB then $E(4\Gamma) \cup E(4\Gamma + p)$ is also an ONB.
\end{theorem}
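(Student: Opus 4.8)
The plan is to build a single unitary operator whose action on the canonical ONB $E(\Gamma)$ produces the set $E(4\Gamma) \cup E(4\Gamma+p)$, and to obtain that unitary by feeding the conjugation-by-$U_p$ automorphism into Lemma~\ref{lem:vp}. Since $E(p\Gamma)$ is assumed to be an ONB, the isometry $U_p$ of Equation~\eqref{eqn:Up} is in fact unitary, so $\widetilde{\alpha}(X) = U_pXU_p^*$ is a genuine $*$-automorphism of $\mathcal{B}(L^2(\mu_{\frac14}))$. By Lemma~\ref{lem:US1} we have $\widetilde{\alpha}(S_0) = S_0$ and $\widetilde{\alpha}(S_1) = M_{p-1}S_1$, so applying Lemma~\ref{lem:vp} with $\rho = \widetilde{\alpha}$ shows that
\[ W := \widetilde{\alpha}(S_0)S_0^* + \widetilde{\alpha}(S_1)S_1^* = S_0S_0^* + M_{p-1}S_1S_1^* \]
is unitary on $L^2(\mu_{\frac14})$.

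It then remains to identify $W\bigl(E(\Gamma)\bigr)$. Using the invariance $\Gamma = 4\Gamma \sqcup (4\Gamma+1)$ together with the adjoint formulas \eqref{Eqn:S_0^*} and \eqref{Eqn:S_1^*}: if $\gamma \in 4\Gamma$ then $S_1^*e_\gamma = 0$ and $S_0S_0^*e_\gamma = S_0 e_{\gamma/4} = e_\gamma$; if $\gamma \in 4\Gamma+1$ then $S_0^*e_\gamma = 0$ and $M_{p-1}S_1S_1^*e_\gamma = M_{p-1}S_1 e_{(\gamma-1)/4} = M_{p-1}e_\gamma = e_{\gamma+p-1}$. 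Thus $W$ fixes each $e_\gamma$ with $\gamma \in 4\Gamma$ and sends $e_\gamma \mapsto e_{\gamma+p-1}$ for $\gamma \in 4\Gamma+1$. Since $W$ is unitary and $E(\Gamma)$ is an ONB, the image $W\bigl(E(\Gamma)\bigr)$ is an ONB; and that image is exactly $E(4\Gamma) \cup \{e_{\gamma+p-1} : \gamma \in 4\Gamma+1\} = E(4\Gamma) \cup E(4\Gamma+p)$, where the two index sets are disjoint because $p$ is odd. This establishes the theorem.

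I expect the only delicate points to be (i) verifying that $\widetilde{\alpha}$ really is a $*$-automorphism rather than merely a $*$-endomorphism — this is precisely where the hypothesis that $E(p\Gamma)$ is an ONB, i.e.\ that $U_p$ is unitary, enters — and (ii) correctly tracking the translation $4\Gamma+1 \mapsto 4\Gamma+p$ induced by the multiplication operator $M_{p-1}$. The genuinely new idea, as opposed to routine verification, is the recognition that the multiplicative hypothesis on $U_p$ can be transported, through Lemmas~\ref{lem:vp} and~\ref{lem:US1}, into the additive unitary $W$; everything else is bookkeeping with the Cuntz relations and the explicit canonical ONB.
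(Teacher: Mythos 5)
Your proposal is correct and follows essentially the same route as the paper's own proof: both use the unitarity of $U_p$ to make $\widetilde{\alpha}(X)=U_pXU_p^*$ a $*$-automorphism, feed it into Lemma~\ref{lem:vp} via Lemma~\ref{lem:US1} to obtain the unitary $S_0S_0^*+M_{p-1}S_1S_1^*$, and then check its action on $E(\Gamma)$ to land on $E(4\Gamma)\cup E(4\Gamma+p)$. The only cosmetic difference is that you compute the action via the adjoint formulas \eqref{Eqn:S_0^*}--\eqref{Eqn:S_1^*} while the paper writes $e_\gamma = S_0e_{\gamma'}$ or $S_1e_{\gamma'}$ and invokes the Cuntz relations; these are the same computation.
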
 

\begin{proof}
Since $E(p\Gamma)$ is an ONB, we have that the operator $U_p$ from Equation \eqref{eqn:Up}  is a unitary operator.  We define the map on $\mathcal{B}(L^2(\mu_{\frac14})))$ \begin{equation}\label{eqn:alphap} \widetilde{\alpha}(X) = U_pXU_p^*.\end{equation}
Since $U_p$ is unitary, it is straightforward to verify that $\widetilde{\alpha}$ is a $*$-automorphism on $\mathcal{B}(L^2(\mu_{\frac14}))$.

If we apply $\widetilde{\alpha}$ to our operators $S_0$ and $S_1$, we have by Lemma \ref{lem:US1},
\[ \widetilde{\alpha}(S_0) = S_0 \quad \textrm{and} \quad \widetilde{\alpha}(S_1) = U_pS_1U_p^* = M_{p-1}S_1.\]  Define the operator \[\widetilde{W} := \widetilde{\alpha}(S_0)S_0^* + \widetilde{\alpha}(S_1)S_1^* = S_0S_0^* +  M_{p-1}S_1S_1^*.\]  Then $\widetilde{W}$ is unitary by Lemma \ref{lem:vp}.

We see that if $\gamma \in 4\Gamma$, i.e.  $\gamma = 4\gamma'$ for some $\gamma' \in \Gamma$, that $\widetilde{W}e_{\gamma} = \widetilde{W}S_0e_{\gamma'} = e_{\gamma}$ since $S_0^*S_0 = I$ and $S_1^*S_0 = 0$ by the Cuntz relations.   Similarly, if $\gamma \in 4\Gamma+1$, hence  $\gamma = 4\gamma'+1$ for some $\gamma' \in \Gamma$, then $\widetilde{W}e_{\gamma} = \widetilde{W}S_1e_{\gamma'} = M_{p-1}S_1e_{\gamma'} = e_{4\gamma'+p}$.  In fact, $\widetilde{W}$ maps $E(4\Gamma + 1)$ bijectively onto $E(4\Gamma + p)$.  Therefore, since $\widetilde{W}$ is unitary, we can conclude that $E(4\Gamma) \cup E(4\Gamma + p)$ is an ONB for $L^2(\mu_{\frac14})$.
\end{proof}
 
  We now address the spectral functions---recall Equation \eqref{eqn:spectral}---for our additive sets.   We can use the splitting $\Gamma = (4\Gamma) \cup (4\Gamma +1)$ to divide the spectral function for $\Gamma$ into the corresponding terms  \[ c_{\Gamma}(t) = \sum_{\gamma \in \Gamma}\prod_{k=1}^{\infty} |\widehat{\mu}(t-4\gamma)|^2 + \sum_{\gamma \in \Gamma}\prod_{k=1}^{\infty} |\widehat{\mu}(t-4\gamma-1)|^2.\]
Denote the sums on the right-hand side of the equation above by $c_0(t)$ and $c_1(t)$ respectively.  More generally, denote \begin{equation} c_m(t) = \sum_{\gamma \in \Gamma}\prod_{k=1}^{\infty} |\widehat{\mu}(t-4\gamma-m)|^2.\end{equation}

 \begin{proposition}\label{prop:per} The function $c_1$ is $2$-periodic.
\end{proposition}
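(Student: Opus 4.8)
The plan is to show $c_1(t+2) = c_1(t)$ by working directly with the infinite-product formula and exploiting the structure of $\Gamma$ under the map $\gamma \mapsto 4\gamma$. First I would recall that $c_1(t) = \sum_{\gamma\in\Gamma}\prod_{k=1}^\infty \cos^2\!\bigl(\tfrac{2\pi(t-4\gamma-1)}{4^k}\bigr)$. Substituting $t \mapsto t+2$ replaces the argument $t-4\gamma-1$ by $t-4\gamma+1$, so it suffices to prove
\[
\sum_{\gamma\in\Gamma}\prod_{k=1}^\infty \cos^2\!\Bigl(\frac{2\pi(t-4\gamma+1)}{4^k}\Bigr) = \sum_{\gamma\in\Gamma}\prod_{k=1}^\infty \cos^2\!\Bigl(\frac{2\pi(t-4\gamma-1)}{4^k}\Bigr).
\]
The key observation is that $4\Gamma$ is precisely the set of elements of $\Gamma$ whose base-$4$ digit in the $4^0$ place is $0$; more usefully, $-4\gamma + 2 = -(4\gamma - 2)$ and one can rewrite $4\gamma - 1 = 4(\gamma) - 1$ versus $4\gamma + 1$, reindexing via the identity $\Gamma = 4\Gamma \sqcup (4\Gamma+1)$ applied one level down inside $\Gamma$ itself is not quite enough — the cleanest route is to use the self-similarity of $\widehat{\mu}$.

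Concretely, I would invoke the functional equation coming from \eqref{eqn:infproduct}, namely $\widehat{\mu}_{\frac14}(t) = \cos(\tfrac{2\pi t}{4})\,\widehat{\mu}_{\frac14}(\tfrac{t}{4})$, to peel off the first factor of each product. Writing $|\widehat{\mu}(t-4\gamma-m)|^2 = \cos^2\!\bigl(\tfrac{2\pi(t-4\gamma-m)}{4}\bigr)\,|\widehat{\mu}(\tfrac{t-m}{4}-\gamma)|^2$ and noting $\cos^2\!\bigl(\tfrac{2\pi(t-4\gamma-m)}{4}\bigr) = \cos^2\!\bigl(\tfrac{2\pi(t-m)}{4}\bigr)$ is independent of $\gamma\in\Z$, we get
\[
c_m(t) = \cos^2\!\Bigl(\frac{2\pi(t-m)}{4}\Bigr)\, c_\Gamma\!\Bigl(\frac{t-m}{4}\Bigr).
\]
Now for $m=1$: $c_1(t+2) = \cos^2\!\bigl(\tfrac{2\pi(t+1)}{4}\bigr)\,c_\Gamma\!\bigl(\tfrac{t+1}{4}\bigr)$ and $c_1(t) = \cos^2\!\bigl(\tfrac{2\pi(t-1)}{4}\bigr)\,c_\Gamma\!\bigl(\tfrac{t-1}{4}\bigr)$. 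Since $\Gamma$ is the canonical spectrum, $c_\Gamma \equiv 1$, so both expressions reduce to the cosine factor, and $\cos^2\!\bigl(\tfrac{2\pi(t+1)}{4}\bigr) = \cos^2\!\bigl(\tfrac{\pi(t+1)}{2}\bigr) = \cos^2\!\bigl(\tfrac{\pi(t-1)}{2}\bigr)$ because the two arguments differ by $\pi$ and $\cos^2$ is $\pi$-periodic. This yields $c_1(t+2) = c_1(t)$.

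The main obstacle is justifying the termwise manipulation of the infinite products and the interchange with the sum over $\gamma$ — i.e., checking that $c_m$ is well-defined (the products converge, the series converges) and that the identity $c_m(t) = \cos^2(\tfrac{2\pi(t-m)}{4})\,c_\Gamma(\tfrac{t-m}{4})$ holds rigorously rather than just formally. This is handled by the standard fact that $\sum_{\gamma\in\Gamma}|\widehat\mu(t-\gamma)|^2$ converges (uniformly on compacta) because $E(\Gamma)$ is an orthonormal set, together with the absolute convergence of the defining product for $\widehat\mu$; once convergence is in hand, the factorization is just the one-step self-similarity \eqref{eqn:ss-measure}, and reindexing $4\gamma$ inside the sum is harmless since translation by the constant $m$ does not interact with the sum. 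Everything then collapses using $c_\Gamma\equiv 1$.
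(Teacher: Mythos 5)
Your proof is correct, but it is a genuinely different (and more elementary) argument than the one in the paper. The paper deduces $2$-periodicity indirectly: it invokes Theorem \ref{thm:main} with $p=5$ and $p=7$, using the known fact (\L aba--Wang) that $5\Gamma$ and $7\Gamma$ are spectra, to get $c_0+c_1=c_0+c_5=c_0+c_7\equiv 1$, hence $c_1=c_5=c_7$; since $c_5(t)=c_1(t-4)$ and $c_7(t)=c_1(t-6)$, the function $c_1$ is both $4$- and $6$-periodic, hence $2$-periodic. You instead peel off the first factor of the infinite product via $\widehat{\mu}(t)=\cos\bigl(\tfrac{2\pi t}{4}\bigr)\widehat{\mu}\bigl(\tfrac{t}{4}\bigr)$, note that $\cos^2\bigl(\tfrac{2\pi(t-4\gamma-m)}{4}\bigr)=\cos^2\bigl(\tfrac{\pi(t-m)}{2}\bigr)$ is independent of $\gamma\in\Z$, and use $c_\Gamma\equiv 1$ to obtain the closed form $c_m(t)=\cos^2\bigl(\tfrac{\pi(t-m)}{2}\bigr)$; in particular $c_1(t)=\sin^2\bigl(\tfrac{\pi t}{2}\bigr)$, which is visibly $2$-periodic and matches Figure 1. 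The convergence points you flag are easily settled (each term factors exactly, and the $\gamma$-independent cosine factor is pulled out of a convergent sum; no sum--product interchange or reindexing is actually needed), and the identity $c_\Gamma\equiv1$ on all of $\R$ is exactly Parseval for the ONB $E(\Gamma)$. What each route buys: yours is self-contained (it needs neither Theorem \ref{thm:main} nor the external input that $5\Gamma$ and $7\Gamma$ are spectra), gives an explicit formula rather than just periodicity, and would make Theorem \ref{thm:dhs} immediate since $c_0(t)+c_p(t)=\cos^2\bigl(\tfrac{\pi t}{2}\bigr)+\sin^2\bigl(\tfrac{\pi t}{2}\bigr)=1$ for odd $p$; the paper's route, by contrast, highlights the operator-theoretic link between the multiplicatively scaled spectra $p\Gamma$ and the additive ones, which is the theme of the paper.
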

\begin{proof}

By Theorem \ref{thm:main} the sets $(4\Gamma) \cup (4\Gamma + 5)$ and $(4\Gamma) \cup (4\Gamma +7)$ are both spectra for $\mu$ --- this follows because it is known (see, for example, \cite{LaWa02}) that the scaled sets $5\Gamma$ and $7\Gamma$ are spectra.
We therefore have
\[ 1 = c_0(t) + \sum_{\gamma \in \Gamma}\prod_{k=1}^{\infty} |\widehat{\mu}(t-4\gamma-5)|^2 = c_0(t) + \sum_{\gamma \in \Gamma}\prod_{k=1}^{\infty} |\widehat{\mu}(t-4\gamma-7)|^2.\]  Using the fact that the set $\Gamma$ itself is also a spectrum, we have \[ 1 = c_0(t) + c_1(t) = c_0(t) + c_5(t) = c_0(t) + c_7(t)\] for all $t \in \mathbb{R}$.   Hence \[ c_1(t) = c_5(t) = c_7(t) \quad \forall t \in \mathbb{R}.\]   But we also observe that $c_5(t) = c_1(t-4)$ and $c_7(t) = c_1(t-6)$, so the function $c_1$ is both $4$-periodic and $6$-periodic, hence is $2$-periodic.
\end{proof}

\begin{figure}[h]
\begin{center}
\setlength\fboxsep{0pt}
  \setlength\fboxrule{0.5pt}
    \fbox{
\includegraphics{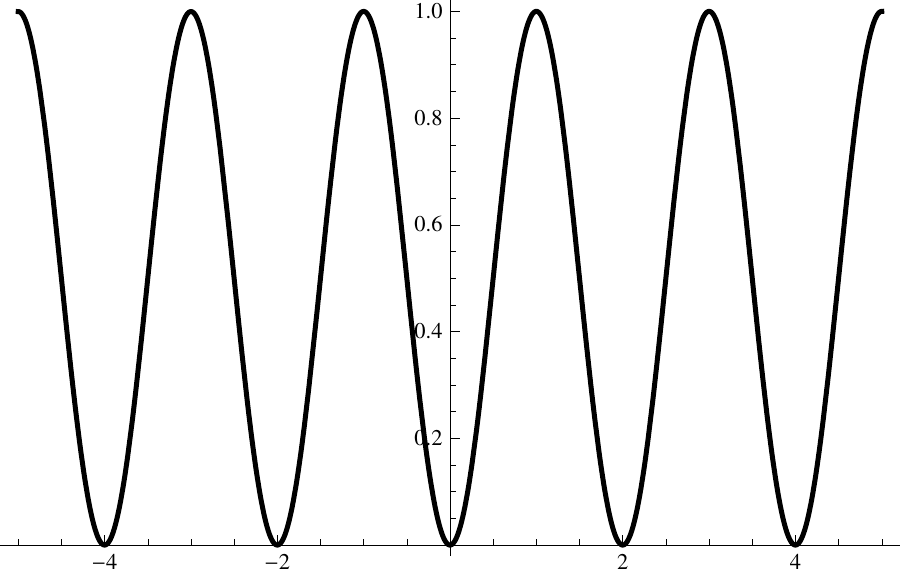}
}
\caption{Numerical estimate of $c_1$, with $16$ factors in the product and $128$ terms in the sum.}

\end{center}\end{figure}

\begin{corollary}
The function $c_0$ is $2$-periodic.
\end{corollary}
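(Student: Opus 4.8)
The plan is to exploit the single identity $c_{\Gamma} = c_0 + c_1$ together with the fact that $\Gamma$ is a spectrum. First I would recall that $E(\Gamma)$ is an ONB for $L^2(\mu_{\frac14})$, so by the characterization in Equation \eqref{eqn:spectral} the spectral function $c_{\Gamma}$ is identically equal to $1$ on $\mathbb{R}$. Next I would invoke the decomposition coming from the splitting $\Gamma = (4\Gamma) \sqcup (4\Gamma + 1)$ recorded just after Theorem \ref{thm:main}, namely $c_{\Gamma}(t) = c_0(t) + c_1(t)$ for all $t \in \mathbb{R}$.

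Combining these two observations gives $c_0(t) = 1 - c_1(t)$ for every $t$. Since Proposition \ref{prop:per} tells us that $c_1$ is $2$-periodic, the function $t \mapsto 1 - c_1(t)$ is also $2$-periodic, and therefore so is $c_0$. That completes the argument.

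There is essentially no obstacle here: the corollary is an immediate formal consequence of Proposition \ref{prop:per} and the normalization $c_{\Gamma} \equiv 1$. The only point worth stating carefully is that the series defining $c_0$ and $c_1$ converge and that the term-by-term splitting of $c_{\Gamma}$ into $c_0 + c_1$ is valid — but this was already established in the passage preceding Proposition \ref{prop:per}, so it may simply be cited rather than reproved.
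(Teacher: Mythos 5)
Your argument is correct and is exactly the intended one: the paper states the corollary without proof, but the identity $c_0(t) + c_1(t) \equiv 1$ (already used in the proof of Proposition \ref{prop:per}) together with the $2$-periodicity of $c_1$ gives $c_0 = 1 - c_1$, hence $c_0$ is $2$-periodic.
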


We next observe that Theorem \ref{thm:main} is a stepping stone to the following result.  

\begin{theorem}\label{thm:dhs}  Given any odd integer $p$, the set $E[(4\Gamma) \cup (4\Gamma+p)]$ is an ONB for $L^2(\mu)$.
\end{theorem}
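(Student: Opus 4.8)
The plan is to sidestep the unitarity of $U_p$ entirely --- that is precisely the property that can fail for a general odd $p$ --- and instead prove the statement at the level of spectral functions, showing directly that the spectral function of the additive set is the constant $1$ and invoking the $2$-periodicity of $c_1$ that has already been secured in Proposition~\ref{prop:per}.

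First I would record a purely bookkeeping observation: since $p$ is odd, $4\Gamma$ consists of even integers while $4\Gamma+p$ consists of odd integers, so $(4\Gamma)\sqcup(4\Gamma+p)$ is a genuine disjoint union (no index is repeated), and hence the spectral function decomposes additively as $c_{(4\Gamma)\cup(4\Gamma+p)}=c_0+c_p$, with the $c_m$ as in the notation introduced just before Proposition~\ref{prop:per}.

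Next comes the one substantive line. From the definition $c_m(t)=\sum_{\gamma\in\Gamma}\prod_{k\ge 1}|\widehat{\mu}(t-4\gamma-m)|^2$ one reads off immediately that $c_p(t)=c_1\bigl(t-(p-1)\bigr)$. Because $p$ is odd, $p-1$ is an even integer, and $c_1$ is $2$-periodic by Proposition~\ref{prop:per}; therefore $c_p\equiv c_1$. Combining this with the identity $c_0+c_1=c_\Gamma\equiv 1$, which holds because $\Gamma$ is a spectrum, we obtain $c_{(4\Gamma)\cup(4\Gamma+p)}\equiv 1$. Then by the characterization recalled around Equation~\eqref{eqn:spectral}, a discrete set whose spectral function is identically $1$ is a spectrum for $\mu_{\frac14}$ (the exponentials involved are unit vectors, so the resulting Parseval frame is automatically orthonormal and complete); alternatively, orthonormality can be checked by hand from the fact that $\widehat{\mu}_{\frac14}$ vanishes at every odd integer and at $\gamma-\gamma'$, hence also at $4(\gamma-\gamma')$, for distinct $\gamma,\gamma'\in\Gamma$.

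I do not anticipate a real obstacle here: essentially all of the difficulty is already packaged into Proposition~\ref{prop:per}, which itself rests on Theorem~\ref{thm:main} together with the classically known spectra $5\Gamma$ and $7\Gamma$, and what remains is a shift-and-periodicity computation. The only point that requires a moment's care is that $p$ is allowed to be a negative odd integer, but this is harmless since $p-1$ is still even, so the periodicity argument applies verbatim.
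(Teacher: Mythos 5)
Your proof is correct and takes essentially the same route as the paper: split the spectral function of the additive set as $c_0+c_p$, shift by the even integer $p-1$, and use the $2$-periodicity established in Proposition~\ref{prop:per} to reduce everything to $c_0+c_1=c_\Gamma\equiv 1$. The only cosmetic difference is that you apply the periodicity to $c_1$ (getting $c_p\equiv c_1$), whereas the paper shifts both terms and uses the $2$-periodicity of $c_0$; the computation is the same.
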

 
 \begin{proof}  This is a direct result of Proposition \ref{prop:per}.  The spectral function for$E[(4\Gamma) \cup (4\Gamma+p)]$  can be written in the two parts \[ \sum_{\gamma \in \Gamma} \prod_{k=1}^{\infty} |\widehat{\mu}(t-4\gamma)|^2 + \sum_{\gamma \in \Gamma} \prod_{k=1}^{\infty} |\widehat{\mu}(t-(4\gamma+p))|^2  .\]  When $p=1$, we have the canonical ONB in the $\frac14$ case.  Otherwise, using the 2-periodicity of $c_0$, we have 
 
 \begin{eqnarray*}  
 c_0(t) + c_p(t) &=& \sum_{\gamma \in \Gamma} \prod_{k=1}^{\infty} |\widehat{\mu}(t-4\gamma)|^2 + \sum_{\gamma \in \Gamma} \prod_{k=1}^{\infty} |\widehat{\mu}(t-(4\gamma+p))|^2\\ &=&  \sum_{\gamma \in \Gamma} \prod_{k=1}^{\infty} |\widehat{\mu}(t-(p-1)-4\gamma)|^2 + \sum_{\gamma \in \Gamma} \prod_{k=1}^{\infty} |\widehat{\mu}(t-(4\gamma+1)-(p-1))|^2  \\&=& c_0(t-p+1) + c_1(t-p+1) \equiv 1.
 \end{eqnarray*}
 
 Since the spectral function is identically $1$, the set  $E[(4\Gamma) \cup (4\Gamma+p)]$ is an ONB for $L^2(\mu)$.
\end{proof} 
 
 \section*{Acknowledgements}
 The authors would like to thank Allan Donsig for helpful conversations while writing an earlier version of this work.
 
  We mention here that the existence of the spectra that we call the \textit{additive spectra} for $\mu_{\frac14}$ is not new.  They are among the examples described, from a different perspective, in Section 5 of  \cite{DHS09}.

\bibliographystyle{alpha}

\end{document}